\newtheorem{theorem}{Теорема}%[section]
\newtheorem{lemma}{Лемма}%[section]
\newcommand\cF{{\mathcal F}}
\newcommand\Prop{{\mathsf P}}
\newcommand\E{{\mathsf E}}
\newcommand\R{{\mathbb R}}
\newcommand\N{{\mathbb N}}
\newcommand\e{{\varepsilon}}
\newcommand\beq{\begin{equation}}
\newcommand\eeq{\end{equation}}
\newcommand\bea{\begin{eqnarray}}
\newcommand\eea{\end{eqnarray}}
\newcommand\bean{\begin{eqnarray*}}
\newcommand\eean{\end{eqnarray*}}
\begin{document}
\title{
%Positive recurrence of an SDE with additive Wiener process and variable switching intensities
%Нахождение достаточные условия для
О свойстве транзиентности и некоторых оценках для процесса диффузии с переключением
}
\author{\text{Кирилл Мосиевич}\footnote{
Московский государственный университет имени М.В.Ломоносова, Москва, Российская Федерация, email: {kirillmosievich@gmail.com}}\;\footnote{Работа выполнена при финансовой поддержке Фонда развития теоретической физики и математики «БАЗИС»,  21-8-2-32-1.
}
}
\maketitle  
% \begin{titlepage}
% 	\begin{center}
% 		%\large
% 		ФЕДЕРАЛЬНОЕ ГОСУДАРСТВЕННОЕ БЮДЖЕТНОЕ ОБРАЗОВАТЕЛЬНОЕ\\
% 		УЧРЕЖДЕНИЕ ВЫСШЕГО ОБРАЗОВАНИЯ
		
% 		<<МОСКОВСКИЙ ГОСУДАРСТВЕННЫЙ УНИВЕРСИТЕТ\\
% 		им. М. В. ЛОМОНОСОВА>>
		
% 		\vspace{0.25cm}
		
% 		МЕХАНИКО-МАТЕМАТИЧЕСКИЙ ФАКУЛЬТЕТ
		
% 		\vspace{0.25cm}
		
% 		КАФЕДРА ТЕОРИИ ВЕРОЯТНОСТЕЙ
		
% 		\vfill
% 		\textsc{ВЫПУСКНАЯ КВАЛИФИКАЦИОННАЯ РАБОТА\\
% 				(ДИПЛОМНАЯ РАБОТА)
% 				}
			
% 			специалиста\\[5mm]
		
% 		{\Large 
%             \textbf{
% 		О свойстве транзиентности и некоторых оценках для процесса диффузии с переключением
% 		}
%   }
% 	\end{center}
% 	\vfill
	
% 	%\newlength{\ML}
% 	%\settowidth{\ML}{«\underline{\hspace{0.7cm}}» \underline{\hspace{2cm}}}
% 	\hfill\begin{minipage}{0.45\textwidth}
% 		Выполнил студент\\
% 		608 группы\\
% 		Мосиевич Кирилл Викторович\\
% 		\\
% 		\underline{\hspace{5cm}}\\
% 		подпись студента\\
% 		\\
% 		Научный руководитель:\\
% 		д.ф.-м.н., профессор\\ Веретенников Александр Юрьевич\\
% 		\\
% 		\underline{\hspace{5cm}}\\
% 		подпись научного руководителя\\
% 	\end{minipage}
% 	\bigskip
	
% 	\vfill
	
% 	\begin{center}
% 		Москва
% 	\end{center}
%         \begin{center}
%             2024
%         \end{center}
% \end{titlepage}

% \tableofcontents
% Содержание
% \newpage
\begin{abstract}
Данная работа показывает ограничения на условия для экспоненциальной эргодичности с данной системой переключения.
\par
В работе установлены достаточные условия транзиентности процесса для модели марковской диффузии с переключениями и двумя режимами, транзиентным и эргодическим, при интенсивностях строго отделимых от нуля. Также получены экспоненциальные оценки вероятности ухода процесса на бесконечность.  
% Установлены достаточные условия транзиентности процесса для модели марковской диффузии с переключениями и двумя режимами, транзиентным и эргодическим, при интенсивностях строго отделимых от нуля. Данная работа показывает ограничения на условия для экспоненциальной эргодичности с данной системой переключения.\\
~
{\em Ключевые слова: {диффузионные процессы, переключение, постоянная интенсивность переключения, транзиентность}}
~
{\em MSC коды: 60H10, 60J60}
\end{abstract}

\newpage
\addcontentsline{toc}{section}{Введение}
\section*{Введение}
% \section{Введение}
\par
Диффузионные процессы с переключениями встречаются во многих областях исследований, таких как моделирование биологических систем \cite{link12, link13}, моделирование систем хранения \cite{link14}. Такие модели содержат две компоненты $(X_t, Z_t)$. Первая компонента $X_t$ используется для описания исследуемой динамической системы, второй компонент $Z_t$ используется для описания случайного изменения среды, в которой находится динамическая система. Поскольку эти модели учитывают изменения окружающей среды, они могут более точно моделировать практические задачи.
\par
Нахождению достаточных условий для эргодичности процесса с переключениями были посвящены работы: \cite{link5} \cite{link6}, \cite{link7}. В работе \cite{link5} рассматривается экспоненциальный случай, работа \cite{link6} посвящена полиномиальному случаю. Помимо изучения достаточных условий эргодичности, важно исследовать транзиентный случай, по которому можно судить об оптимальности полученных результатов в эргодическом случае. В этой работе берутся обратные, но более сильные требования (остается некоторый "зазор"), к условиям, введенным в работе \cite{link5}. Что также показывает сложность в дальнейшем "улучшении"\, условий в работе \cite{link5}.

\addcontentsline{toc}{subsection}{Постановка задачи}
\subsection*{Постановка задачи}
Зададим вероятностное пространство $(\Omega, \cF,\cF_t, \Prop)$ с фильтрацией \\
$\cF_t = \sigma(\cF^W_t\sqcup\cF^Z_t)$, где $\cF^W_t$ - естественная фильтрация одномерного виноровского процесса $W = (W_t)_{t\geq0}$, $\cF^Z_t$ - естественная фильтрация, $Z=(Z_t)_{t\ge0}$, марковского процесс с бинарным множеством значений $S = \{0, 1\}$, с положительными постоянными и конечными интенсивностями переключения: $\lambda_{0} =: \lambda_{+}, \; \lambda_{1} =: \lambda_{-}$. При чем процессы $Z_t$ имеет независимые с $W_t$ распределения, и моменты переключения $Z_t$ распределены экспоненциально.
\par
Рассмотрим стохастический процесс с переключением $(X_t, Z_t)$ с непрерывной компонентой $X$ и дискретной компонентой $Z$. Причем стохастическое дифференциальное уравнение на компоненту $X$ имеет вид:
\begin{equation}\label{SDE}
dX_t = b(X_t, Z_t) dt + dW_t,\; t\geq0 \; , X_0 = x, \;, Z_0 = z.\\
\end{equation}
Траектории процесса $Z$ непрерывные справа имеют предел слева, так как фазовое пространство процесса дискретно; распределения прыжков процесса $Z$ независимы при фиксированной компоненты $x$.
% В режиме $Z_t = 0$ процесс $X_t$ "транзиентен"\ ,  в режиме $Z_t = 1$ процесс "рекуррентен". 
\par
Обозначим\\
$$
    b(x, 0) = b_-(x), \;\; b(x, 1) = b_+(x),
$$
Положим также, что  снос процесса $X$ ограничен, то есть, 
\begin{eqnarray*}
 b_+ &:=& ||b_+(x)|| = \sup _{x} |b_+(x)| < \infty,\\
 b_- &:=& ||b_-(x)|| = \sup _{x} |b_-(x)| < \infty.
\end{eqnarray*}
Тогда процесс  $(X_t, Z_t)$ корректно определен и задается системой,
\begin{align*}
    dX_t = b(X_t, Z_t) dt + dW_t,\; t\geq0, \; X_0 = x \in R,\\
    dZ_t = 1(Z_t = 0)d\pi^0_t + 1(Z_t = 1)d\pi^1_t, \; Z_0 \in \{0, 1\},
\end{align*}
где $\pi^i_t,\, i\in \{0,1\}$ - два пуассоновских процесса с интенсивностями $\lambda_+, \lambda_-$.\\
Более точно,
\begin{align}
\pi^i_t = \overline{\pi}^i_{\phi_i(t)},
\end{align}
где $\overline{\pi}^i_{\phi(t)},\, i\in\{0,1\}$ - являются, в свою очередь, двумя пуассоновскими процессами с постоянной интенсивностью, скачки которых не зависят от винеровского процесса $W_t$ и друг от друга.
\begin{align*}
t \to \phi_i(t) := \int\limits_0^t \lambda_i\, ds,\; i=0,1
\end{align*}
замена времени, применяемая к каждому из них соответственно.
\par
В условиях ограниченности сноса и его согласованности по паре переменных существование и единственность сильного решения уравнения $(\ref{SDE})$ следует из работы $\cite{link1}$, или $\cite{link2}$, или $\cite{link3}$.
\par
Рассмотрим моменты переключения процесса $Z$:
\begin{align}{\label{T_2n}}
T_0 := \inf\left(t\geq0: Z_t = 0\right),
0 \leq T_0 < T_1 < T_2 < \dots \;, 
\end{align}
где $T_n := \inf\left(t > T_{n-1}, Z_t \not= Z_{T_{n-1}} \right)$.\\
Они являются моментами остановки относительно фильтрации $\cF_t = \cF_t^{W, \pi^0, \pi^1},\, t\ge0$. Также заметим, что решение уравнения ($\ref{SDE}$) имеет предел слева, то есть положение системы в момент переключения $(X_{T_{2n}},Z_{T_{2n}})$ однозначно задается левым предельным значением $(X_{T_{2n-}}, Z_{T_{2n}-})$. Поэтому процесс $(X, Z)$ марковский. Его генератор имеет вид: 
\begin{align*}
Lh(x,z) = \frac{1}{2} \frac{\partial^2}{\partial x^2} + b(x,z)\frac{\partial}{\partial x} + \lambda_z(h(x,z) - h(x, \overline{z})),
\end{align*}
где $\overline{z} = 1\{z = 0\},\, z\in\{0, 1\}$.\\
Покажем, что процесс Феллеровский, откуда согласно работе \cite{link8} он будет обладать сильно марковским свойством.
\par
Для любого $t > 0$ положим,
\begin{align*}
u(s, x, z) := \E_{s,x,z}f(X_t, Z_t),
\end{align*}
где функция $f$ - произвольная непрерывная и ограниченная функция. Заметим, что $z$ - дискретно, то нам нужно проверить только непрерывность по паре переменных $(s,x)$.
\par
Тогда вектор-функция $u(s,x) := (u(s,x,0), u(s,x,1))$ удовлетворяет системе дифференциальных уравнений,
\begin{align}
    \begin{cases}{\label{PDE}}
        u_s(s,x,0) + \frac{1}{2}u_{xx}(s,x,0) + b(x,0)u_x(s,x,0) + \lambda_0 (u(s,x,1) - u(s,x,0)) = 0,\;\\
        u(t,x,0) = f(x,0)\\
        u_s(s,x,1) + \frac{1}{2}u_{xx}(s,x,1) + b(x,1)u_x(s,x,1) + \lambda_1 (u(s,x,0) - u(s,x,1)) = 0,\;\\ u(t,x,1) = f(x,1)
    \end{cases}
\end{align}
Из работы [\cite{link10}, теорема 5.5] следует, что решение системы (\ref{PDE}) существует единственное и лежит локально в Соболевском классе $\mathcal{W}^{1,2}_p$ с любым $p>1$, так как $u(s,x,z)$ ограниченная. Откуда, согласно лемме о вложении [\cite{link11}, лемма 3.3], решение задается непрерывной функцией $u(s, x, z)$, что и дает нам феллеровость процесса $(X, Z)$.

\newpage
\addcontentsline{toc}{section}{Основные результаты}
\section*{Основные результаты}

\begin{theorem}\label{t1}
Пусть снос $b_t:= b(x, z)$ ограничен и согласован с фильтрацией $\cF_t$, и измерим по паре $(x, z)$, и существуют $ r_-, \; r_+$ строго больше нуля такие, что для любого $x \in \R$,
$$
b_-(x) \geq -r_- , \; b_+(x) \geq r_+
$$
и
$$
\frac{r_+}{\lambda_+ } 
> \frac{r_-}{\lambda_- }.
$$
\par
Тогда процесс $X_t \to \infty$ \footnote{Так как снос процесса $X_t$ ограничен, можно также показать, что уход процесса на бесконечность не может быть более чем линейным.} почти наверное, при $t \to \infty$. 
Причем найдется постоянная $c_0 > 0$, такая, что
\begin{align}{\label{t1_1}}
\liminf_{ t \to \infty}\frac{X_t}{t} \ge c_0, \quad\text{п.н.}
\end{align}
% {\color{blue} Добавить замечание, что уход не может быть более чем линейный, так как снос ограничен.}
\par
Более того, верна экспоненциальная оценка.
% $\exists C, K, c_0, c, t_0 > 0:\, \forall \epsilon > 0\, \exists 0 < \kappa_3, \lambda < 1,\,\\
% \forall t > t_0(C, K, \kappa_3, \lambda, c) $:\\
Существует положительная константа $c_0$, такая, что для любого $\epsilon > 0$ найдется константа $0 < \kappa < 1$, такая, что для достаточно больших $t \ge t_0(\kappa, \epsilon)$:
\begin{align}{\label{t1_2}}
\Prop_x\left(\frac{X_{t} - x}{t} - c_0 < -\epsilon \right) < \kappa^{t} < 1.
\end{align}
\end{theorem}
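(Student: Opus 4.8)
Основная идея — сравнить процесс $X_t$ с "усечённой" динамикой, в которой мы заменяем снос в каждом режиме на его нижнюю границу ($-r_-$ в режиме $0$ и $r_+$ в режиме $1$). Поскольку $b_-(x)\ge -r_-$ и $b_+(x)\ge r_+$ поточечно, теорема сравнения для одномерных СДУ с одним и тем же винеровским процессом даёт $X_t\ge Y_t$ п.н., где $Y_t$ удовлетворяет $dY_t = (-r_-\mathbf{1}(Z_t=0) + r_+\mathbf{1}(Z_t=1))\,dt + dW_t$. Тогда
\begin{align*}
Y_t = Y_0 + r_+\!\!\int_0^t\!\! \mathbf{1}(Z_s=1)\,ds - r_-\!\!\int_0^t\!\! \mathbf{1}(Z_s=0)\,ds + W_t.
\end{align*}
Процесс $Z$ — это эргодическая марковская цепь с непрерывным временем на $\{0,1\}$ с интенсивностями $\lambda_+$ (из $0$) и $\lambda_-$ (из $1$); её стационарное распределение есть $\pi(1) = \lambda_+/(\lambda_++\lambda_-)$, $\pi(0) = \lambda_-/(\lambda_++\lambda_-)$. По эргодической теореме $\frac1t\int_0^t \mathbf{1}(Z_s=1)\,ds \to \pi(1)$ п.н., и среднее значение сноса стремится к
\begin{align*}
c_0 := r_+\,\frac{\lambda_+}{\lambda_++\lambda_-} - r_-\,\frac{\lambda_-}{\lambda_++\lambda_-} = \frac{\lambda_+\lambda_-}{\lambda_++\lambda_-}\left(\frac{r_+}{\lambda_-}-\frac{r_-}{\lambda_+}\right),
\end{align*}
которое строго положительно в силу предположения $r_+/\lambda_+ > r_-/\lambda_-$ (эквивалентно $r_+\lambda_- > r_-\lambda_+$). Вместе с $W_t/t\to 0$ это даёт $\liminf_{t\to\infty} X_t/t \ge \liminf Y_t/t = c_0 > 0$ п.н., откуда $X_t\to\infty$ и оценка $(\ref{t1_1})$.

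Для экспоненциальной оценки $(\ref{t1_2})$ я бы работал с процессом $Y_t$ (так как $\{X_t - x < \text{что-то}\}\subseteq\{Y_t - Y_0 < \text{то же}\}$ при $X_0=Y_0=x$) и оценивал бы
\begin{align*}
\Prop_x\!\left(\tfrac{X_t-x}{t} - c_0 < -\epsilon\right) \le \Prop\!\left(\tfrac{W_t}{t} < -\tfrac{\epsilon}{2}\right) + \Prop\!\left(\tfrac{1}{t}\!\int_0^t\! g(Z_s)\,ds - c_0 < -\tfrac{\epsilon}{2}\right),
\end{align*}
где $g(z) = r_+\mathbf{1}(z=1) - r_-\mathbf{1}(z=0)$. Первое слагаемое оценивается стандартно гауссовым хвостом: $\Prop(W_t/t < -\epsilon/2) = \Prop(W_1 < -\epsilon\sqrt t/2) \le \exp(-\epsilon^2 t/8)$, что уже экспоненциально убывает по $t$. Для второго слагаемого нужна большая уклонения (или просто экспоненциальная граница Чернова) для аддитивного функционала $\int_0^t g(Z_s)\,ds$ эргодической цепи $Z$ с конечным пространством состояний: по теории Донскера–Варадана (или прямым вычислением преобразования Лапласа через наибольшее собственное значение возмущённого генератора $Q + \theta\,\mathrm{diag}(g)$, что для $2\times 2$-матрицы делается явно) существует $\theta^*>0$ и $I(\epsilon)>0$ такие, что $\Prop(\frac1t\int_0^t g(Z_s)ds < c_0 - \epsilon/2) \le e^{-I(\epsilon) t}$. Полагая $\kappa := \exp(-\tfrac12\min(\epsilon^2/8,\, I(\epsilon)))$, при больших $t$ получаем сумму $\le 2\kappa^{2t}\cdot\text{const} \le \kappa^t < 1$, что и требуется (возможно, потребуется слегка уменьшить $\kappa$ или увеличить $t_0$, чтобы поглотить константу $2$).

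\textbf{Основное препятствие.} Главная техническая трудность — аккуратное обоснование теоремы сравнения $X_t\ge Y_t$ в присутствии переключающегося (случайного, но $\cF_t$-согласованного) сноса: на каждом интервале $[T_n, T_{n+1})$ режим $Z$ фиксирован, и там применима классическая теорема сравнения для СДУ (Икеда–Ватанабэ), но нужно проследить, что неравенство "склеивается" в моменты переключения $T_n$ (это следует из непрерывности траекторий $X$ и $Y$) и что оно выполнено одновременно для всех $n$ на одном вероятностном пространстве. Второстепенная, но требующая внимания часть — получение явной, не зависящей от начального состояния $z$ экспоненциальной оценки для $\int_0^t g(Z_s)\,ds$: поскольку цепь $Z$ конечна и неприводима, константа в границе Чернова может быть выбрана равномерно по $z\in\{0,1\}$ (достаточно оценить $\E_z e^{\theta\int_0^t g}$ через $\le C e^{t\Lambda(\theta)}$ с $\Lambda(\theta) = $ логарифм спектрального радиуса, $\Lambda'(0) = c_0$, $\Lambda$ гладкая и выпуклая), так что выбор $\theta$ и последующая оптимизация по $\theta$ дают нужное $I(\epsilon)>0$ при малых $\epsilon$.
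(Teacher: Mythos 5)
Ваш подход существенно отличается от авторского и в целом жизнеспособен. В статье утверждение доказывается через экспоненциальную функцию Ляпунова: по формуле Ито по циклам переключений выводится оценка $\E e^{-\lambda(X_{T_{2n}}-x)+aT_{2n}}\le(1-a_2\lambda)^n$ (лемма 2), затем неравенство Маркова, неравенство Коши--Буняковского и лемма Бореля--Кантелли дают результат вдоль подпоследовательности $T_{2n}$, а произвольное $t$ обрабатывается разложением $X_t=X_{T_{2k(t)}}+(X_t-X_{T_{2k(t)}})$ с привлечением леммы 1. Вы же сводите всё к процессу $Y_t=x+\int_0^t g(Z_s)\,ds+W_t$ и пользуетесь эргодической теоремой и границей Чернова для аддитивного функционала двухсостоянной цепи; это короче и даёт точную асимптотическую скорость процесса сравнения. Отмечу, что названное вами «основное препятствие» на самом деле отсутствует: оба уравнения управляются одним и тем же $W$ с единичной диффузией, поэтому $X_t-Y_t=\int_0^t\bigl[b(X_s,Z_s)-g(Z_s)\bigr]\,ds$ не содержит мартингальной части и неотрицательно просто потому, что $b(y,z)\ge g(z)$ при всех $y$; теорема сравнения Икеды--Ватанабэ здесь не нужна.

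Однако в вашем тексте есть одна конкретная ошибка, которую надо исправить: знак $c_0$. При принятом вами соглашении (состояние $1$ имеет снос $\ge r_+$ и интенсивность выхода $\lambda_-$, состояние $0$ --- снос $\ge -r_-$ и интенсивность $\lambda_+$) действительно $\pi(1)=\lambda_+/(\lambda_++\lambda_-)$, и тогда $c_0=(r_+\lambda_+-r_-\lambda_-)/(\lambda_++\lambda_-)=\frac{\lambda_+\lambda_-}{\lambda_++\lambda_-}\bigl(\frac{r_+}{\lambda_-}-\frac{r_-}{\lambda_+}\bigr)$. Положительность этого выражения равносильна $r_+\lambda_+>r_-\lambda_-$, а вовсе не условию теоремы $r_+\lambda_->r_-\lambda_+$; ваша фраза «строго положительно в силу предположения» неверна (контрпример: $r_+=r_-=1$, $\lambda_+=1$, $\lambda_-=2$ --- условие теоремы выполнено, а ваше $c_0=-1/3$). Расхождение восходит к несогласованности обозначений в самой статье: в постановке задачи режим со сносом $b_+$ помечен как $z=1$ с интенсивностью $\lambda_-$, тогда как в доказательстве леммы 2 режим со сносом $\ge r_+$ имеет среднее время пребывания $1/\lambda_+$. При последнем (очевидно, подразумеваемом) соглашении $\pi(+)=\lambda_-/(\lambda_++\lambda_-)$ и $c_0=(r_+\lambda_--r_-\lambda_+)/(\lambda_++\lambda_-)>0$ ровно при условии теоремы. Вам нужно согласовать индексы так, чтобы интенсивность $\lambda_+$ отвечала режиму со сносом $\ge r_+$; после этой правки остальная часть плана --- гауссовский хвост для $W_t/t$ и граница Чернова через главное собственное значение возмущённого генератора --- проходит и даёт обе оценки (\ref{t1_1}) и (\ref{t1_2}).
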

Напомним определение $T_{2n}$, 
\begin{align*}
0 \leq T_0 < T_1 < T_2 < \dots
\end{align*}
Где $T_0 := \inf\left(t\geq0: Z_t = 0\right)$, $T_n := \inf\left(t > T_{n-1}, Z_t \not= Z_{T_{n-1}} \right)$.

\begin{lemma}\label{l1}
В условиях теоремы при $n \to \infty$,
\begin{align}{\label{T_n_to_infinity}}
\frac{T_{2n}}{n} \stackrel{\text{п.н.}}\to \frac{1}{\lambda_+} + \frac{1}{\lambda_-}.
\end{align}

Более того, для любой константы $c_0 \in (0, \frac{1}{\lambda_+} + \frac{1}{\lambda_-})$, найдется \\$c_1 := c_1(c_0) > 0$: 
\begin{align}{\label{ET_2n_to_infinity}}
   \E e^{\lambda (c_0 n - T_{2n})} \le (1 - c_1\lambda)^n < 1, 
\end{align}
для достаточно малых $\lambda > 0$.
\end{lemma}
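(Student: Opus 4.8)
\emph{План доказательства.} Идея состоит в том, чтобы воспользоваться явной вероятностной структурой моментов переключения. Между $T_{2k-2}$ и $T_{2k-1}$ процесс $Z$ находится в состоянии $0$, поэтому время пребывания $\xi_k := T_{2k-1} - T_{2k-2}$ распределено экспоненциально с параметром $\lambda_+$; аналогично $\eta_k := T_{2k} - T_{2k-1} \sim \mathrm{Exp}(\lambda_-)$. По построению процесса $Z$ (экспоненциальные времена пребывания, не зависящие ни от $W$, ни друг от друга) семейства $\{\xi_k\}_{k\ge1}$ и $\{\eta_k\}_{k\ge1}$ независимы в совокупности, и
\begin{align*}
T_{2n} = T_0 + \sum_{k=1}^n \xi_k + \sum_{k=1}^n \eta_k,
\end{align*}
где $T_0 = 0$ при $Z_0 = 0$ и $T_0 \sim \mathrm{Exp}(\lambda_-)$ при $Z_0 = 1$; в любом случае $T_0 < \infty$ п.н.

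Первое утверждение (\ref{T_n_to_infinity}) я бы получил, применив усиленный закон больших чисел к обеим суммам: $\frac1n\sum_{k=1}^n \xi_k \to \frac{1}{\lambda_+}$ и $\frac1n\sum_{k=1}^n \eta_k \to \frac{1}{\lambda_-}$ п.н., а также $\frac{T_0}{n} \to 0$ п.н.; сложив эти пределы, получаем требуемое.

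Для экспоненциальной оценки я бы поступил так. Поскольку $T_0 \ge 0$, имеем $\E e^{\lambda(c_0 n - T_{2n})} \le e^{\lambda c_0 n}\,\E e^{-\lambda(\sum_k\xi_k + \sum_k\eta_k)}$, и, пользуясь независимостью и формулой $\E e^{-\lambda\zeta} = \mu/(\mu+\lambda)$ для $\zeta\sim\mathrm{Exp}(\mu)$, получаю
\begin{align*}
\E e^{\lambda(c_0 n - T_{2n})} \le \left( e^{\lambda c_0}\cdot\frac{\lambda_+}{\lambda_+ + \lambda}\cdot\frac{\lambda_-}{\lambda_- + \lambda} \right)^{n} =: g(\lambda)^n .
\end{align*}
Далее остаётся проверить, что $g(0) = 1$ и $\frac{d}{d\lambda}\log g(\lambda)\big|_{\lambda=0} = c_0 - \frac{1}{\lambda_+} - \frac{1}{\lambda_-} < 0$ ввиду условия $c_0 < \frac{1}{\lambda_+} + \frac{1}{\lambda_-}$. Отсюда при достаточно малых $\lambda > 0$ имеем $g(\lambda) \le e^{-\delta\lambda} \le 1 - c_1\lambda < 1$, где $\delta := \tfrac12\big(\tfrac{1}{\lambda_+} + \tfrac{1}{\lambda_-} - c_0\big) > 0$, а $c_1 := c_1(c_0) > 0$ выбирается чуть меньше $\delta$ (используя $e^{-x} \le 1 - x/2$ при малых $x>0$). Возводя в степень $n$, получаем (\ref{ET_2n_to_infinity}).

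Существенного препятствия здесь нет: вся оценка сводится к вычислению преобразования Лапласа экспоненциального распределения и к разложению функции $g$ в окрестности нуля. Единственное, за чем стоит аккуратно проследить, — это независимость времён пребывания $\xi_k,\eta_k$ (она заложена в определении $Z$ через пуассоновские процессы, не зависящие от $W$) и корректная трактовка начального слагаемого $T_0$, которое лишь увеличивает $T_{2n}$ и потому только помогает в верхней оценке; при желании точную константу в (\ref{ET_2n_to_infinity}) можно ещё подправить, учтя вклад $T_0$.
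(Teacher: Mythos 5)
Ваше доказательство верно и по существу совпадает с доказательством в статье: то же разложение $T_{2n}$ на независимые экспоненциальные времена пребывания, тот же усиленный закон больших чисел для (\ref{T_n_to_infinity}) и то же вычисление преобразований Лапласа $\E e^{-\lambda\xi}=\mu/(\mu+\lambda)$ с последующим разложением по Тейлору при $\lambda\to0$ для (\ref{ET_2n_to_infinity}). Единственное косметическое отличие — вы отбрасываете множитель $\E e^{-\lambda T_0}\le1$, тогда как в статье он вычисляется явно; на результат это не влияет.
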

\begin{proof}[Доказательство леммы \ref{l1}.]
Заметим, что
\begin{align*}
T_{2n} = \underbrace{-0 + T_0}_{\Delta T_0} + \underbrace{- T_0 + T_1}_{\Delta T_1} + \underbrace{- T_1 + T_2}_{\Delta T_2} + \dots  + \underbrace{- T_{2n-1} + T_{2n}}_{\Delta T_{2n}} = T_0 + \sum _{i = 1}^{2n}\Delta T_i.
\end{align*}

Тогда $\{\Delta T_i + \Delta T_{i + 1}\}_{i=0}^{\infty}$ - независимые, одинаково распределенные случайные величины, причем для любого $n \in \N$  $n$-ый момент конечен,\\ $\E|\Delta T_i|^n < \infty,\, \forall i$, и также заметим, что $\E \Delta T_i + \Delta T_{i+1} = \frac{1}{\lambda+} + \frac{1}{\lambda-} > 0,\, \forall i \ge 0$.\\
Тогда по Усиленному закон больших чисел [\cite{link4}, глава 4, параграф 3], при $n \to \infty$, 
\begin{align*}
\frac{1}{n} T_{2n} \stackrel{\text{п.н.}}\to \frac{1}{\lambda+} + \frac{1}{\lambda-}.
\end{align*}
Что доказывает первое утверждение (\ref{T_n_to_infinity}) леммы \ref{l1}.

Положим $\Lambda = c_+ + c_-$, где $c_+ > 0$ и $c_- > 0$, их мы выберем позже. Используя неравенство Маркова и независимость скачков процесса $Z$, имеем для любого $\lambda > 0$,
\begin{eqnarray*}
\Prop\left(\frac{T_{2n}}{n} - \Lambda < -\epsilon_1 \right) &=& \Prop\left(-T_{2n} + \Lambda n > n\epsilon_1 \right)\\\\
&=& \Prop\left(e^{-\lambda T_{2n} + \Lambda \lambda n} > e^{\lambda\epsilon_1 n} \right)
\leq e^{-\lambda \epsilon_1 n} 	\E e^{-\lambda T_{2n} + \Lambda \lambda n}\\\\
&=&  e^{-\lambda \epsilon_1 n}  \E e^{-\lambda T_0} e^{-\lambda\left( \sum _{i = 1}^{n}(c_+ - \Delta T_{2i-1}) + \sum _{i = 1}^{n}(c_- - \Delta T_{2i})\right) }\\\\
&=& e^{-\lambda \epsilon_1 n}  \E e^{-\lambda T_0} \prod _{i = 1}^{n}\E e^{\lambda(c_+ - \Delta T_1)} \prod _{i = 1}^{n}\E e^{\lambda(c_- - \Delta T_2)}\\\\
&=& e^{-\lambda \epsilon_1 n} \E e^{-\lambda T_0} \left(\E  e^{\lambda(c_+ - \Delta T_1)} \E e^{\lambda(c_- - \Delta T_2)}\right) ^n.
\end{eqnarray*}
Заметим, что при $\lambda \to 0$, 
\begin{eqnarray*}
\E e^{\lambda (c_+ - \Delta T_{1})} 
&=&
\int  _{0}^\infty e^{\lambda(c_+ - x)}\lambda_+ e^{-\lambda_{+}x}\,dx = \frac{\lambda_{+} e^{\lambda c_+}}{\lambda + \lambda_{+}}\int _{0}^\infty(\lambda + \lambda_{+})e^{-(\lambda_{+} + \lambda)x}\, dx\\
&=& \frac{\lambda_{+}e^{\lambda c_+}}{(\lambda + \lambda_{+})} \stackrel{\text{Тейлор}}{=} 1 - (\frac{1}{\lambda_+} - c_+)\lambda + \overline{o}(\lambda),
\end{eqnarray*}
\begin{eqnarray*}
\E e^{\lambda (c_- - \Delta T_{2})} 
&=&
\int  _{0}^\infty e^{\lambda(c_- - x)}\lambda_+ e^{-\lambda_{-}x}\,dx = \frac{\lambda_{-} e^{\lambda c_-}}{\lambda + \lambda_{-}}\int _{0}^\infty(\lambda + \lambda_{-})e^{-(\lambda_{-} + \lambda)x}\, dx\\
&=& \frac{\lambda_- e^{\lambda c_-}}{\lambda + \lambda_-}=  1 - (\frac{1}{\lambda_-} - c_-)\lambda + \overline{o}(\lambda),\\
\E e^{-\lambda T_0} &=& \int  _{0}^\infty e^{-\lambda x}\lambda_- e^{-\lambda_{-}x}\, dx = \frac{\lambda_-}{\lambda + \lambda_-} \int\limits_0^{\infty}(\lambda + \lambda_-)e^{-(\lambda + \lambda_-)x}\, dx\\
&=&
\frac{\lambda_-}{\lambda + \lambda_-} = 1 - \frac{1}{\lambda_{-}}\lambda + \overline{o}(\lambda).
\end{eqnarray*}
В итоге получаем, для любого $0< \lambda < \min(\lambda_+, \lambda_-)$:
\begin{eqnarray}{\label{ET}}
\E e^{\lambda(\Lambda n - T_{2n})} &=& \frac{\lambda_-}{\lambda_- + \lambda} \left(\frac{\lambda_+ \lambda_-}{(\lambda_+ + \lambda)(\lambda_- + \lambda)}e^{\Lambda \lambda}\right)^n, \\
\label{ET-}
\E e^{\lambda(T_{2n} - \Lambda n )} &=& \frac{\lambda_-}{\lambda_- - \lambda} \left(\frac{\lambda_+ \lambda_-}{(\lambda_+ - \lambda)(\lambda_- - \lambda)}e^{-\Lambda \lambda}\right)^n.
\end{eqnarray}
Откуда, для любой константы $0< \Lambda < \frac{1}{\lambda_+} + \frac{1}{\lambda_-}$ найдется такая константа $c_1 > 0$, что при достаточно маленьком $\lambda$, 
\begin{align*}
\E e^{\lambda (\Lambda n -  T_{2n})} \le \left(1 - \frac{1}{\lambda_-} \lambda + \overline{o}(\lambda)\right) \left(1 - (\frac{1}{\lambda_+} + \frac{1}{\lambda_-} - \Lambda)\lambda + \overline{o}(\lambda) \right)^n \le (1 - c_1\lambda)^n < 1,\\
\end{align*}
Более того, для любого $\epsilon_1 > 0$, положив $\Lambda = \frac{1}{\lambda_+} + \frac{1}{\lambda_-}$, при $\, \lambda \to 0$ имеем следующие разложение,
\begin{eqnarray*}
\Prop\left(\frac{T_{2n}}{n} - \Lambda < -\epsilon_1 \right) \le \left( 1 - \frac{1}{\lambda_{-}}\lambda + \overline{o}(\lambda)\right) 
\left( 1 - \epsilon_1\lambda + \overline{o}(\lambda)\right)^n.\\
\end{eqnarray*}
В итоге, беря $\Lambda = \frac{1}{\lambda_{+}} + \frac{1}{\lambda_{-}}$, для любого положительного $\epsilon_1 \in R$ найдутся $\lambda_0 := \lambda_0(\epsilon_1, \Lambda)$ и $0< \kappa < 1$, такие, что для любого $\lambda > \lambda_0$
\begin{align}
\Prop\left(\frac{T_{2n}}{n} - \Lambda < -\epsilon_1 \right) \le \left(1 - \frac{1}{\lambda_-}\lambda \right)(1 - \epsilon_1\lambda)^n \le \kappa_1^n < 1.
\end{align}
Аналогично получается оценка. Для любого $\epsilon_2 > 0$ найдется $0 < \kappa_2 < 1$, такая, что
\begin{align}
\Prop\left(\frac{T_{2n}}{n} - \Lambda > \epsilon_2 \right) \le \kappa_2^n < 1.
\end{align}

\end{proof}
\begin{lemma} \label{l2}
В условиях теоремы найдется $c_1 > 0$, такая, что
\begin{align}{\label{l2_1}}
\liminf_{n \to \infty}{\frac{X_{T_{2n}}}{n}} \stackrel{\text{п.н.}} \ge c_1
\end{align}
Более того, верны экспоненциальные оценки. Найдется такая константа $c_1$, что для любого $ \epsilon > 0$, найдется $0 < \kappa_1 < 1$:
\begin{align} \label{l2_2}
\Prop_x\left(\frac{X_{T_{2n}}}{n} - c_1 < -\epsilon \right) \le \kappa_1^n < 1,
\end{align}
для достаточно больших $n > n_0(\epsilon, c_1)$.
\end{lemma}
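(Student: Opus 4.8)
\textbf{План доказательства леммы \ref{l2}.} Идея в том, что оценки на снос равномерны по $x\in\R$, поэтому следить за положением компоненты $X$ не нужно: поведение $X_{T_{2n}}$ определяется лишь экспоненциальными длинами интервалов постоянства процесса $Z$ и винеровским слагаемым, и всё сводится к усиленному закону больших чисел (для сходимости п.н.) и к экспоненциальному неравенству Чебышёва (для хвоста), в точности как в доказательстве леммы \ref{l1}. Отправная точка — тождество $X_{T_{2n}}-x=\int_0^{T_{2n}}b(X_s,Z_s)\,ds+W_{T_{2n}}$. Разобью интеграл по интервалам постоянства $Z$: на $[0,T_0)$ (если он невырожден) $Z\equiv1$, значит $b=b_+(X_s)\ge r_+>0$ и $\int_0^{T_0}b\,ds\ge0$; на каждом $(T_{2j-2},T_{2j-1})$ имеем $Z\equiv0$, $b\ge -r_-$; на каждом $(T_{2j-1},T_{2j})$ имеем $Z\equiv1$, $b\ge r_+$. Полагая $\Sigma^-_n:=\sum_{j=1}^n(T_{2j-1}-T_{2j-2})$ и $\Sigma^+_n:=\sum_{j=1}^n(T_{2j}-T_{2j-1})$, получаю поточечную оценку снизу
\begin{align}\label{plan1}
X_{T_{2n}}-x\ \ge\ -\,r_-\,\Sigma^-_n+r_+\,\Sigma^+_n+W_{T_{2n}},\qquad T_{2n}=T_0+\Sigma^-_n+\Sigma^+_n ,
\end{align}
причём слагаемые в $\Sigma^-_n$ (длины интервалов $\{Z=0\}$) независимы и экспоненциально распределены со средним $1/\lambda_-$, а слагаемые в $\Sigma^+_n$ — аналогично со средним $1/\lambda_+$, и в силу марковости $Z$ набор $(T_0,\Sigma^-_n,\Sigma^+_n)$ состоит из независимых между собой величин.

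\textbf{Сходимость п.н. (\ref{l2_1}).} По усиленному закону больших чисел $\Sigma^-_n/n\to1/\lambda_-$ и $\Sigma^+_n/n\to1/\lambda_+$ п.н.; по лемме \ref{l1} отношение $T_{2n}/n$ сходится, поэтому $T_{2n}\to\infty$ п.н., а так как $W_t/t\to0$ п.н. при $t\to\infty$, то $W_{T_{2n}}/n=(W_{T_{2n}}/T_{2n})(T_{2n}/n)\to0$ п.н. Деля (\ref{plan1}) на $n$ и переходя к нижнему пределу, получаю
\begin{align*}
\liminf_{n\to\infty}\frac{X_{T_{2n}}}{n}\ \ge\ \frac{r_+}{\lambda_+}-\frac{r_-}{\lambda_-}\ =:\ c_1\ >\ 0,
\end{align*}
где положительность $c_1$ — это в точности предположение теоремы \ref{t1}.

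\textbf{Экспоненциальная оценка (\ref{l2_2}).} Так как $x$ фиксировано, достаточно (подправив $\epsilon$ и $n_0$) оценить $\Prop_x\big((X_{T_{2n}}-x)/n<c_1-\epsilon\big)$; по (\ref{plan1}) это событие влечёт $\{-r_-\Sigma^-_n+r_+\Sigma^+_n+W_{T_{2n}}<(c_1-\epsilon)n\}$. Пусть $\cG$ — $\sigma$-алгебра, порождённая всеми моментами переключения $Z$ (то есть всеми $\Delta T_i$ и $T_0$); она не зависит от $W$, а $T_{2n}$ $\cG$-измерим, поэтому при условии $\cG$ величина $W_{T_{2n}}$ распределена как $N(0,T_{2n})$. Применяя экспоненциальное неравенство Чебышёва с параметром $s>0$, затем условное среднее по $\cG$, тождество $T_{2n}=T_0+\Sigma^-_n+\Sigma^+_n$ и упомянутую независимость (ср. выкладки, приведшие к (\ref{ET}) в лемме \ref{l1}), получаю
\begin{align*}
\Prop_x\Big(\tfrac{X_{T_{2n}}-x}{n}<c_1-\epsilon\Big)\ \le\ \E e^{\frac{s^2}{2}T_0}\cdot\Big(e^{s(c_1-\epsilon)}\,\E e^{(s r_-+\frac{s^2}{2})\sigma^-}\,\E e^{(-s r_++\frac{s^2}{2})\sigma^+}\Big)^{\!n},
\end{align*}
где $\sigma^-,\sigma^+$ — независимые экспоненциальные величины со средними $1/\lambda_-,1/\lambda_+$. Разложение по Тейлору при $s\to0$ даёт для основания степени $g(s)=\big(1+(c_1-\epsilon)s+O(s^2)\big)\big(1-c_1 s+O(s^2)\big)=1-\epsilon s+O(s^2)$, так что при некотором $s=s(\epsilon)>0$ имеем $g(s)=:\kappa_1\in(0,1)$, а $\E e^{\frac{s^2}{2}T_0}<\infty$ при малых $s$. Тогда правая часть не превосходит $C\,\kappa_1^{\,n}$, а при всех $n\ge n_0(\epsilon,c_1)$ — величины $\widetilde\kappa_1^{\,n}$ с любым наперёд заданным $\widetilde\kappa_1\in(\kappa_1,1)$, что и доказывает (\ref{l2_2}).

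\textbf{Главное затруднение.} Оно — в управлении винеровским слагаемым $W_{T_{2n}}$ в случайный момент $T_{2n}$, который контролируется лишь косвенно (через лемму \ref{l1}); оно снимается обусловливанием на $\sigma$-алгебру переключений $\cG$, относительно которой $W$ независим, а $W_{T_{2n}}$ условно гауссовское с дисперсией $T_{2n}=T_0+\Sigma^-_n+\Sigma^+_n$. Возникающая при этом связь дисперсии с суммами $\Sigma^{\pm}_n$ безобидна: она вносит лишь множители вида $e^{\frac{s^2}{2}\sigma^{\pm}}=1+O(s^2)$ и константу $\E e^{\frac{s^2}{2}T_0}$, не затрагивающие главный член $-\epsilon s$ в $g(s)$. (Альтернативно $W_{T_{2n}}$ можно оценить принципом отражения совместно с оценкой верхнего хвоста $T_{2n}$ из (\ref{ET-}).)
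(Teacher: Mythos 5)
Ваше доказательство корректно, но идёт по существенно иному пути, чем авторское. В статье лемма~\ref{l2} доказывается через формулу Ито для $e^{-\lambda X_t+at}$: на каждом полуцикле явно оценивается $\E\int e^{-\lambda(X_t-x)+at}\,dt$, по индукции выводится совместная экспоненциальная оценка $\E_x e^{-\lambda(X_{T_{2n}}-x)+aT_{2n}}\le(1-a_2\lambda)^n$ (формула~(\ref{est_X_{T_{2n}}})), затем хвостовая оценка~(\ref{l2_2}) получается из неравенства Маркова и Коши--Буняковского в комбинации с леммой~\ref{l1}, а утверждение~(\ref{l2_1}) --- по лемме Бореля--Кантелли. Вы вместо этого используете поточечное разложение $X_{T_{2n}}-x=\int_0^{T_{2n}}b\,ds+W_{T_{2n}}$ с оценкой подынтегральной функции на интервалах постоянства $Z$: тогда~(\ref{l2_1}) следует прямо из усиленного закона больших чисел (причём с явной, по существу оптимальной константой $c_1=r_+/\lambda_+-r_-/\lambda_-$), а~(\ref{l2_2}) --- из экспоненциального неравенства Чебышёва с обусловливанием на $\sigma$-алгебру переключений, относительно которой $W_{T_{2n}}$ условно гауссовская с дисперсией $T_{2n}=T_0+\Sigma_n^-+\Sigma_n^+$; все шаги (независимость $W$ и $Z$, конечность производящих функций при малых $s$, разложение Тейлора основания степени) обоснованы. Ваш путь элементарнее: не нужны ни формула Ито, ни теоремы сравнения, ни Борель--Кантелли. Две оговорки. Во-первых, авторская выкладка попутно даёт совместный момент~(\ref{est_X_{T_{2n}}}) по паре $(X_{T_{2n}},T_{2n})$, который далее используется в доказательстве теоремы~\ref{t1}; в вашей схеме он получается тем же обусловливанием после добавления множителя $e^{aT_{2n}}=e^{a(T_0+\Sigma_n^-+\Sigma_n^+)}$, но это надо проговорить, если заменять доказательство целиком. Во-вторых, вы сопоставляете режиму со сносом $b_\pm$ интенсивность $\lambda_\pm$ (средняя длина интервала $1/\lambda_\pm$), тогда как формальные определения во введении ($b(x,0)=b_-$, $\lambda_0=\lambda_+$) дают обратное сопоставление; ваша трактовка согласована с условием $r_+/\lambda_+>r_-/\lambda_-$ теоремы и с собственными выкладками статьи, так что это не ошибка, а унаследованная из текста неоднозначность обозначений.
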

\begin{proof}[Доказательство леммы \ref{l2}.]
Сначала получим оценки на вероятность ухода остановленного процесса $X_{T_{2n}}$ в момент времени $T_{2n}$ на бесконечность. После чего получим основной результат (\ref{l2_1}) леммы.
\par
Рассмотрим случай, где $Z_0 = 0$. Случай $Z_0 = 1$ рассматривается аналогично.\\
Пусть $a > 0$ произвольная постоянная, ее выберем позже. По формуле Ито:  
\begin{eqnarray*}
d e^{-\lambda X_{t} + at} &=& +ae^{-\lambda X_{t} + at} dt  -\lambda e^{-\lambda X_{t}+at} dX_t + \frac{\lambda^2}2 e^{-\lambda X_{t} + at} 
(dX_t)^2  \\
&=& +a e^{-\lambda X_{t} + at} dt  -\lambda e^{-\lambda X_{t} + at} (bdt + dW_t)\\
&&\; + \, \frac{\lambda^2}2 e^{-\lambda X_{t}+at} dt.
\end{eqnarray*}
В интегральной форме $\mathsf E := \E_x$, 
\begin{eqnarray*}\Large
\E e^{-\lambda (X_{T_2} - x) + a T_2} - 1 &=&
\E \int _0^{T_2} e^{-\lambda (X_{t} - x) + at} (a  -\lambda b_t  + \frac{\lambda^2}2) dt \\
&\le& 
\underbrace{(a-r_+\lambda) \E \int _0^{T_1} e^{-\lambda (X_t - x) + at}dt}_{=: I_1} + \\
&&\; +\, \underbrace{(a+r_-\lambda) \E \int _{T_1}^{T_2} e^{-\lambda (X_t - X_{T_1}) + at} dt}_{=: I_2}.
\end{eqnarray*}
Рассмотрим $t \in [0, T_1]$. Тогда в силу единственности решения $X_t$ совпадает с решением уравнения
\begin{align*}
\bar X_t - x = \int _{0}^{t} b_+(\bar X_s)\, ds + \int _{0}^{t} dW_s.
\end{align*}
При  $t \in [0, T_1]$ имеем:
\begin{align*}
\bar X_t = x + \int _{0}^{t} b_+(\bar X_s)\, ds + \int _{0}^{t} dW_s \stackrel{\text{п.н.}}{\geq}   x + \int _{0}^{t} r_+\, ds + \int _{0}^{t} dW_s =: \tilde{X_t}.
\end{align*}
С помощию этих неравенств получим оценки на величины:
\begin{align*}
\E\int _0^{T_1}e^{-\lambda (X_t - x) + at}\,dt,\;\;
\E\int _{T_1}^{T_2}e^{-\lambda (X_{T_2} - X_{T_1}) + at}\,dt.
\end{align*}
И, используя теорему Фубини и независимость $Z_t$ и $\cF^W$, получаем оценку на $I_{T_1} := \E\int _0^{T_1}e^{-\lambda (X_t - x) + at}dt$:
\begin{eqnarray*}\Large
I_{T_1} &\leq&  \E\int _0^{T_1}e^{-\lambda (\tilde{X_t} - x) + at}dt \\
&=& \E\int _0^{T_1}e^{(a - \lambda r_+)t  -\int _{0}^{t}\lambda dW_s}dt \\
&=& \E\int _0^{T_1}e^{(a - \lambda r_+)t) (-\int _{0}^{t}\lambda dW_s }dt \\
&=& \E\int _0^{\infty}1\{ t < T_1\}e^{(a + \frac{\lambda^2}{2} - \lambda r_+ )t}
e^{-\int _{0}^{t}\lambda dW_s - \frac{1}{2}\int _{0}^{t}\lambda^2 ds}dt \\
&=& \int _0^{\infty} \E 1\{ t < T_1\} e^{(a + \frac{\lambda^2}{2} - \lambda r_+ )t} \e^{-\int _{0}^{t}\lambda dW_s - \frac{1}{2}\int _{0}^{t}\lambda^2 ds}dt \\
&=&  \int _0^{\infty}  e^{(a + \frac{\lambda^2}{2} - \lambda r_+ )t} \E I\{t < T_1\} *1\, dt \\
&=& \int _0^{\infty}  e^{(a + \frac{\lambda^2}{2} - \lambda r_+ )t}e^{-\lambda_+ t}\, dt \\
&=&  \frac1{\lambda_+ -a+\lambda r_+ -\lambda^2/2}.
\end{eqnarray*}

Рассмотрим $t \in [T_1, T_2]$. Тогда в силу единственности решения, значения $X_t$ на этом отрезке совпадает со значением решения СДУ,
$$
\hat X_t = X_{T_1} + \int _{T_1}^{t} b_-(\hat X_s)\, ds + \int _{T_1}^{t} dW_s, \quad t\ge T_1.
$$
При  $t \in [T_1, T_2]$ имеем:
$$ 
\hat X_t = X_{T_1} + \int _{T_1}^{t} b_-(\hat  X_s)\, ds + \int _{T_1}^{t} dW_s \stackrel{\text{п.н.}}{\geq}  X_{T_1} + \int _{T_1}^{t} -r_-\, ds + \int _{T_1}^{t} dW_s =: \tilde{X_t}.
$$
Получим оценку на величину:
\begin{align*}
    \E_{\cF_{T_1}}(\int _{T_1}^{T_2}e^{-\lambda (X_t - x) + at}dt := \E\left( \int _{T_1}^{T_2}e^{-\lambda (X_t - x) + at}dt \vert X_{T_1} = x,\, T_1 = \tau\right).
\end{align*}
Использем замену времени в стохастическом итеграле:
\begin{eqnarray*}
\E_{{\cal F}_{T_1}}\int _{T_1}^{T_2}e^{-\lambda (X_t - x) + at}dt &\leq&  \E_{{\cal F}_{T_1}}\int _{T_1}^{T_2}e^{-\lambda (\tilde{X_t} - x) + at}dt 
\\	
&=& e^{a T_1}
\E_{{\cal F}_{T_1}}\int _{T_1}^{T_2}e^{-\lambda \tilde{X_t} + a(t-T_1) }dt
\\
&=& e^{-\lambda X_{T_1} + a T_1}
\E_{{\cal F}_{T_1}} \int_0^T 
e^{ at + \lambda r_- t- \lambda \tilde W_t}dt
\\
&=&
e^{-\lambda X_{T_1} + a T_1}
\frac{1}{\lambda_- -a- \lambda r_- -\lambda^2/2}.
\end{eqnarray*}

Тогда,
$$
\E\left( \E_{{\cal F}_{T_1}}\int _{T_1}^{T_2}e^{-\lambda (X_t - x) + at}dt\right)
\le
\frac{1}{\lambda_- -a- \lambda r_- -\lambda^2/2}\left(1 + 
\frac{a-r_+\lambda}{\lambda_+ -a+ \lambda r_+ -\lambda^2/2} \right).
$$
Поэтому, 
\begin{eqnarray*}
I_2 \le 
\frac{a+r_-\lambda}{\lambda_- -a- \lambda r_- -\lambda^2/2}
\left(1 + 
\frac{a-r_+\lambda}{\lambda_+ -a+ \lambda r_+ -\lambda^2/2} \right),
\end{eqnarray*}
и
\begin{eqnarray*}
I_1 \le 
\frac{a-r_+\lambda}{\lambda_+ -a+ \lambda r_+ -\lambda^2/2}\, .
\end{eqnarray*}
В итоге при достаточно малых $ a,\lambda > 0$, например, $a =\hat{a} \lambda,$ и при$ \lim _{n \to \infty} \lambda = 0,$ положив $0< \hat{a} < \frac{\lambda_{-}r_+ - \lambda_{+}r_-}{\lambda_{+} + \lambda_{-}}$, получаем:
\begin{eqnarray*}
\E_x e^{-\lambda X_{T_2} + aT_2} \le 1 + I_1 + I_2
\\
\le 1 + \underbrace{\frac{a-r_+\lambda}{\lambda_+ -a+ \lambda r_+ -\lambda^2/2}}_{(*)} +
\underbrace{\frac{a+r_-\lambda}{\lambda_- -a- \lambda r_- -\lambda^2/2}}_{(**)}
\left(1 + 
\underbrace{\frac{a-r_+\lambda}{\lambda_+ -a+ \lambda r_+ -\lambda^2/2}}_{(*)} \right)
\\
=
1 +\underbrace{\left[(\hat{a} - r_+) \lambda_{+}^{-1} + (\hat{a} + r_-)\lambda_{-}^{-1}) \right] }_{=-a_2} \lambda + \overline{o}(\lambda)\, .
\end{eqnarray*}
Оцени каждое из слагаемых по отдельности,
\begin{eqnarray*}
(*) &=& \frac{(a - r_+)\lambda_{+}^{-1}\lambda}{1 - \left[\frac{a - r_+}{\lambda_{+}} + \frac{1}{2\lambda_{+}}\lambda \right]\lambda}  = (a - r_+)\lambda_{+}^{-1}\lambda \left(1  + \left[\frac{a - r_+}{\lambda_{+}} + \frac{1}{2\lambda_{+}}\lambda \right]\lambda + \overline{o}(\lambda)\right)\, .\\
&=&(a - r_+)\lambda_{+}^{-1}\lambda + \overline{o}(\lambda)\\
(**) &=& \frac{(a + r_-)\lambda_{-}^{-1}\lambda}{1 - \left[\frac{a + r_-}{\lambda_{-}} + \frac{1}{2\lambda_{-}}\lambda \right]\lambda} = (a + r_-)\lambda_{-}^{-1}\lambda + \overline{o}(\lambda).
\end{eqnarray*}
Заметим, что в предположениях теоремы и при выполнении условия $0<\hat a<  \frac{\lambda_{-}r_+ - \lambda_{+}r_-}{\lambda_{+} + \lambda_{-}}$, получаем, что $a_2 := (\hat{a} - r_+) \lambda_{+}^{-1} + (\hat{a} + r_-)\lambda_{-}^{-1}$ больше нуля.
\par
В итоге имеем,
\begin{eqnarray*}
\E_x e^{-\lambda (X_{T_2}-x) + aT_2} \le 
1 - a_2\lambda + o(\lambda).
\end{eqnarray*}
Аналогично имеем оценку на $X_{T_4}$, 
\begin{eqnarray*}
\E_x e^{-\lambda (X_{T_4}-x) + aT_4} 
= \E e^{-\lambda (X_{T_2}-x) + aT_2} \times \underbrace{E\left(e^{-\lambda (X_{T_4} - X_{T_2}) + a(T_4 - T_2)}\vert {\cal F}_{T_2}\right)}_{\le 1 - a_2\lambda} \le (1 - a_2\lambda)^2. 
\end{eqnarray*}
Далее по индукции получаем оценку для произвольного $n \in \N$,
\begin{eqnarray}{\label{est_X_{T_{2n}}}}
\E_xe^{-\lambda (X_{T_{2n}}-x) + aT_{2n}} 
\le (1 - a_2\lambda)^n.
\end{eqnarray}

Используя неравенство Маркова, получим экспоненциальные оценки на $X_{T_{2n}}$, при $n > 1$,
\begin{align*}
\Prop_x\left(\frac{X_{T_{2n}}-x}{n} - c_1 < -\epsilon \right) 
= \Prop_x\left(-(X_{T_{2n}} - x) + c_1 n > n\epsilon \right)\\
\stackrel{\forall \lambda > 0 } 
= \Prop_x\left(e^{-\lambda(X_{T_{2n}} - x) + c_1 \lambda n)} > e^{\lambda\epsilon n} \right) 
\leq e^{-\lambda \epsilon n} \E\left(e^{-\lambda(X_{T_{2n}} - x) + c_1 \lambda n}\right) \, .
\end{align*}
Используя результаты леммы \ref{l1} и неравенство Коши-Буняковского-Шварца, имеем при достаточно больших $n$ следующую оценку на вероятность хода процесса на бесконечность.
\par
Положим $0 < c_1 < \min(\frac{1}{\lambda_+} + \frac{1}{\lambda_-},  \frac{\lambda_{-}r_+ - \lambda_{+}r_-}{\lambda_{+} + \lambda_{-}})$, беря произвольную $\epsilon > 0$, найдутся $\lambda :=\lambda(\epsilon, c_1) > 0$ и константы $c > 0$, и $0 < \kappa < 1$, такие, что
\begin{eqnarray*}
\Prop_x\left(\frac{X_{T_{2n}}-x}{n} - c_1 < -\epsilon \right)
&\leq& e^{-\lambda \epsilon n} \E\left(e^{-\lambda(X_{T_{2n}} - x) + c_1 \lambda n}\right) 
\nonumber 
\\
&\le& e^{-\lambda \epsilon n} \left( \E e^{-2\lambda(X_{T_{2n}} - x) + 2 c_1 \lambda T_{2n}} \E e^{2\lambda(c_1 n - T_{2n})}\right) ^{1/2}
\nonumber 
\\
&\le& (1 - \epsilon_1 \lambda +  \overline{o}(\lambda))^n \left( (1 - a_2 \lambda + \overline{o}(\lambda))^n (1 - c_1 \lambda + \overline{o}(\lambda))^n\right)^{\frac{1}{2}}
\nonumber 
\\
&\le& (1 - c\lambda)^n \le \kappa ^n < 1.
\end{eqnarray*}
Что и завершает доказательства второго утверждения (\ref{l2_2}) леммы \ref{l2}.
\par
Докажем первое утверждение (\ref{l2_1}) леммы \ref{l2}.
\par
Существует такая $c_1 > 0$, что при достаточно малом положительном $\lambda\,, \lambda := \lambda(\epsilon, c_1)$, имеем:
\begin{align*}
\sum _{n = 1}^{\infty}\Prop_x\left(-(X_{T_{2n}}-x) + n c_1 > n\epsilon \right)  < \infty.
\end{align*}

По лемме Бореля-Кантелли из полученного неравенства следует, что
$$
\Prop_x(-(X_{T_{2n}} - x) + n c_1 \leq \epsilon n \, \text{бесконечно часто}) = 0.
$$
Поэтому, для любого $\epsilon > 0$ для п.в. $\omega \in \Omega$, существует $N_0 = N_0(\omega)$:
\begin{align*}
X_{T_{2n}}-x \geq nc_1 - \epsilon n  \;\;\; \forall n \geq N_0, 
\end{align*}
или
\begin{align*}
\liminf_{n \to \infty}{\frac{X_{T_{2n}}}{n}} \stackrel{\text{п.н.}} \ge c_1.
\end{align*}
Аналогично рассматривается случай, когда $Z_0 = 1$.\\
Лемма доказана.
% Итог:  $\liminf_{n \to \infty}{\frac{X_{T_{2n}}}{n}} \stackrel{\text{п.н.}} \ge c_1 > 0$.
\end{proof}

\begin{proof}[Доказательство теоремы \ref{t1}.]
Рассмотрим $k(t) = \lceil \frac{t}{\Lambda} \rceil$. $\Lambda > 0 $ будет выбрана нами позже. Заметим справедливость неравенства: $ \frac{t}{\Lambda} \le \lceil \frac{t}{\Lambda} \rceil < \frac{t}{\Lambda} + 1$.
Так как $k(t)$ неслучайная, $T_{2k(t)}$ - марковский момент остановки.\\
Рассмотрим следующее представление процесса $X_t$,
\begin{align*}
X_t = X_{T_{2k(t)}} + (X_t - X_{T_{2k(t)}}).
\end{align*}
Из леммы \ref{l2} и вида процесса $k(t)$ следует, что
\begin{align*}
\liminf\limits_{t \to \infty}\frac{X_{T_{2k(t)}}}{t} = \liminf\limits_{t \to \infty}\frac{X_{T_{2k(t)}}}{k(t)}\frac{k(t)}{t} \stackrel{\text{п.н.}}\ge c_1  \frac{1}{\Lambda} > 0
\end{align*}
Поэтому остается рассмотреть слагаемое: $X_t - X_{T_{2k(t)}}$.\\
\begin{eqnarray*}
\frac{X_t - X_{T_{2k(t)}}}{t} &=& \frac{\int\limits_{0}^{t}b(X_s, Z_s)\, ds + \int\limits_{0}^t\, dW_s - \left(\int\limits_{0}^{T_{2k(t)}}b(X_s, Z_s)\, ds + \int\limits_{0}^{T_{2k(t)}}\, dW_s\right)}{t} \\
&\stackrel{\text{п.н.}}\ge& \frac{-(r_- + r_+) |t - T_{2k(t)}| + W_t - W_{T_{2k(t)}}}{t}.
\end{eqnarray*}
Заметим, используя лемму \ref{l1} и закон повторного логарифма \cite{link9}:
\begin{eqnarray*}
\liminf\limits_{t \to \infty}-(r_+ + r_-)\frac{|t - T_{2k(t)}|}{t} &=& -(r_+ + r_-)\liminf\limits_{t \to \infty} \frac{k(t)}{t}|\frac{T_{2k(t)}}{k(t)} - \frac{t}{k(t)}| \stackrel{\text{п.н.}}= 0,\\
\liminf_{t \to \infty}{\frac{W_t - W_{T_{2k(t)}}}{t}} \stackrel{\text{п.н.}}= 0.
\end{eqnarray*}
Откуда и получаем первый результат теоремы (\ref{t1_1}).

Установим экспоненциальную оценку \ref{t1_2}.\\
Используя неравенство Маркова и неравество Гельдера, для любого $\epsilon_1 > 0$ и некоторого $c_0 > 0$ и достаточно малое положительное $\lambda$ получим следующую оценку:
\begin{eqnarray*}
\Prop_x\left(\frac{X_t - x}{t} - c_0 < -\epsilon_1\right)
&\le&
e^{-\lambda \epsilon_1 t}\E e^{-\lambda (X_t - x) + \lambda c_0 t} = e^{-\lambda \epsilon_1 t}\E e^{-\lambda(X_{T_{2k(t)}} - x + (X_t - X_{T_{2k(t)}})) + \lambda c_0 t)}\\
&\le&
e^{-\lambda \epsilon_1 t}\left[\E e^{-2\lambda(X_t - X_{T_{2k(t)}})}\right]^{\frac{1}{2}}\left[\E e^{-2\lambda (X_{T_{2k(t)}} - x) + 2\lambda c_0 t} \right]^{\frac{1}{2}}.
\end{eqnarray*}
Используя неравество Гельдера и свойства стохастической экспоненты получаем следующую оценку:
\begin{eqnarray*}
\E e^{-2\lambda(X_t - X_{T_{2k(t)}})} &\le& \E e^{2\lambda\left((r_- + r_+)(t \vee T_{2k(t)} - t \wedge T_{2k(t)}) - (W_{t} - W_{T_{2k(t)}}\right)}\\ 
&\le&
\left[\E e^{4\lambda (r_- + r_+) |t - T_{2k(t)}|}\right]^{\frac{1}{2}} \left[\E e^{-4\lambda(W_{t} - W_{T_{2k(t)}})}\right]^\frac{1}{2}\\
&=&
\left[\E e^{4\lambda (r_- + r_+) |t - T_{2k(t)}|}\right]^{\frac{1}{2}} \left[\E e^{-8\lambda W_{t - T_{2k(t)} \wedge t}}\right]^\frac{1}{4}\left[\E e^{8\lambda W_{T_{2k(t)} - T_{2k(t)} \wedge t}}\right]^\frac{1}{4}\\
&\le&
\left[\E e^{4\lambda (r_- + r_+) |t - T_{2k(t)}|}\right]^{\frac{1}{2}} \left[\E e^{\int\limits_{0}^{t - T_{2k(t)}\wedge t}-8\lambda\, dW_s \mp \frac{1}{2}\int\limits_{0}^{t - T_{2k(t)}\wedge t} 64\lambda^2\, ds}\right]^\frac{1}{4} * \\
&&\; *\, \left[\E e^{8\lambda W_{T_{2k(t)} - T_{2k(t)} \wedge t}}\right]^\frac{1}{4}\\
&\le&
\left[\E e^{4\lambda (r_- + r_+) |t - T_{2k(t)}|}\right]^{\frac{1}{2}} \left[1 * \left[\E e^{64\lambda^2 |t - T_{2k(t)}|} \right]^{\frac{1}{2}} \right]^{\frac{1}{2}}.
\end{eqnarray*}
Используя промежуточный результат леммы \ref{l2} (\ref{est_X_{T_{2n}}}) и неравенство Гельдера, получим следующую оценку для достаточно маленьких положительных $\lambda$ при $0 < 4 c_0 < \min(\frac{1}{\lambda_+} + \frac{1}{\lambda_-},  \frac{\lambda_{-}r_+ - \lambda_{+}r_-}{\lambda_{+} + \lambda_{-}})$:
\begin{eqnarray*}
\E e^{-2\lambda (X_{T_{2k(t)}} - x) + 2\lambda c_0 t} &=& \E e^{-2\lambda (X_{T_{2k(t)}} - x) + c_0 \lambda T_{2k(t)} - 2\lambda c_0(t - T_{2k(t)})}\\
&\le&
\left[e^{-4 \lambda (X_{T_{2k(t)}} - x) + 4 c_0 \lambda T_{2k(t)}}\right]^{\frac{1}{2}} \left[\E e^{-4 \lambda c_0 (t - T_{2k(t)})}\right]^{\frac{1}{2}}\\
&\le&
\left[(1 - 4c_0\lambda)^{k(t)}\right]^{\frac{1}{2}} \left[\E e^{-4 \lambda c_0 (t - T_{2k(t)})}\right]^{\frac{1}{2}}.
\end{eqnarray*}
В итоге имеем следующую оценку,
\begin{eqnarray*}
\Prop_x\left(\frac{X_t - x}{t} - c_0 < -\epsilon_1\right)&\le& e^{-\lambda \epsilon_1 t}\left[\E e^{-2\lambda(X_t - X_{T_{2k(t)}})}\right]^{\frac{1}{2}}\left[\E e^{-2\lambda (X_{T_{2k(t)}} - x) + 2\lambda c_0 t} \right]^{\frac{1}{2}}\\
&\le&
e^{-\lambda \epsilon_1 t} \left[\E e^{4\lambda (r_- + r_+) |t - T_{2k(t)}|}\right]^{\frac{1}{4}} \left[\E e^{16\lambda^2 |t - T_{2k(t)}|} \right]^{\frac{1}{8}}*\\
&& \; *\, \left[(1 - 4c_0\lambda)^{\frac{k(t)}{2}}\left[\E e^{-4 \lambda c_0 (t - T_{2k(t)})}\right]^{\frac{1}{2}}\right]^{\frac{1}{2}}\\
&=&
e^{-\lambda \epsilon_1 t} \left[\E e^{4\lambda (r_- + r_+) |t - T_{2k(t)}|}\right]^{\frac{1}{4}} \left[\E e^{16\lambda^2 |t - T_{2k(t)}|} \right]^{\frac{1}{8}} \\
&& \; * \, \left[\E e^{-4 \lambda c_0 (t - T_{2k(t)})}\right]^{\frac{1}{4}}\left[(1 - 4c_0\lambda)\right]^{\frac{k(t)}{4}}
\end{eqnarray*}
 Используя оценки (\ref{ET}, \ref{ET-}) из леммы \ref{l1} и неравенство $e^{|a - b|} \le e^{a - b} + e^{b - a}$, положив $\Lambda = \frac{1}{\lambda_+} + \frac{1}{\lambda_-}$, найдем такую константу $K > 0$, что при $\lambda \to 0+$ будет выполнено неравенство:
\begin{eqnarray*}
\E e^{\lambda |t - T_{2k(t)}|} &\le& e^{\lambda(t - \Lambda k(t))}\E e^{\lambda (\Lambda k(t) - T_{2k(t)})} + e^{\lambda(\Lambda k(t) - t)} \E e^{\lambda (T_{2k(t)} - \Lambda k(t))}\\
&=& 
e^{\lambda(t - \Lambda k(t))}\frac{\lambda_-}{\lambda_- + \lambda} \left(\frac{\lambda_+ \lambda_-}{(\lambda_+ + \lambda)(\lambda_- + \lambda)}e^{\Lambda \lambda}\right)^{k(t)} +\\
&& \; + \, e^{\lambda(\Lambda k(t) - t)} \frac{\lambda_-}{\lambda_- - \lambda} \left(\frac{\lambda_+ \lambda_-}{(\lambda_+ - \lambda)(\lambda_- - \lambda)}e^{-\Lambda \lambda}\right)^{k(t)}\\
&=&
e^{\lambda t} \frac{\lambda_-}{\lambda_- + \lambda} \left(\frac{\lambda_+ \lambda_-}{(\lambda_+ + \lambda)(\lambda_- + \lambda)}\right)^{k(t)} +\\
&& \; + \, e^{-\lambda t} \frac{\lambda_-}{\lambda_- - \lambda} \left(\frac{\lambda_+ \lambda_-}{(\lambda_+ - \lambda)(\lambda_- - \lambda)}\right)^{k(t)} \\
&=&
e^{\lambda t}  \frac{\lambda_-}{\lambda_- + \lambda} \left[1 - \Lambda \lambda + \overline{o}(\lambda)\right]^{k(t)} + e^{-\lambda t}  \frac{\lambda_-}{\lambda_- - \lambda} \left[1 + \Lambda \lambda + \overline{o}(\lambda)\right]^{k(t)}\\
&\le&
e^{\lambda t}  \frac{\lambda_-}{\lambda_- + \lambda} \left[1 - \Lambda \lambda + \overline{o}(\lambda)\right]^{\frac{t}{\Lambda} + 1} + e^{-\lambda t}  \frac{\lambda_-}{\lambda_- - \lambda} \left[1 + \Lambda \lambda + \overline{o}(\lambda)\right]^{\frac{t}{\Lambda}}\\
&\le&
K e^{\lambda t}\left[1 - \lambda + \overline{o}(\lambda)\right]^t + K \left[1 + \lambda  \overline{o}(\lambda)\right]^t\\
&=&
K \left[1 + \overline{o}(\lambda)\right].
\end{eqnarray*}

В итоге для любого $\epsilon_1 > 0$ и $\Lambda = \frac{1}{\lambda_+} + \frac{1}{\lambda_-}$ при $\lambda \to 0+$ найдется положительные константы $\hat{K}, c_0$ для достаточно больших $t$:
\begin{eqnarray*}
\Prop_x\left(\frac{X_t - x}{t} - c_0 < -\epsilon_1\right) \le \hat{K} \e^{-\lambda \epsilon_1 t} \left[1 + \overline{o}(\lambda)\right]^{t}\left[1 - 4 c_0 \lambda \right]^{\frac{k(t)}{4}}.
\end{eqnarray*}
Или существует константа $c_0 >0$ такая, что для любого $\epsilon_1 > 0$ найдется $ 0< \kappa < 1$ и при достаточно больших $t > t_0(\lambda, \epsilon_1)$ выполнена оценка:
\begin{align*}
\Prop_x\left(\frac{X_t - x}{t} - c_0 < -\epsilon_1\right) < \kappa^t < 1.
\end{align*}
Теорема доказана.

\end{proof}

% \addcontentsline{toc}{section}{Заключение}
% \section*{Заключение}
% % \section{Заключение}
% Данная работа показывает ограничения на условия для экспоненциальной эргодичности с данной системой переключения.
% \par
% В работе установлены достаточные условия транзиентности процесса для модели марковской диффузии с переключениями и двумя режимами, транзиентным и эргодическим, при интенсивностях строго отделимых от нуля. Также получены экспоненциальные оценки вероятности ухода процесса на бесконечность при выполнении условий теоремы \ref{t1}.  

\section*{Благодарности}
Автор благодарен профессору Веретенникову А. Ю. 
%и анонимному рецензенту 
%(anonymous - лучше, чем undisclosed)
за постановку задачи и внимание к работе.

\newpage


\begin{thebibliography}{0}

\addcontentsline{toc}{section}{Список литературы}
\bibitem{link3}
Веретенников А. Ю. О сильных решениях и явных формулах для решений стохастических интегральных уравнений //Математический сборник. – 1980. – Т. 111. – №. 3. – С. 434-452.

\bibitem{link4}
Ширяев А. Н. Вероятность. – МЦНМО, 2007.

\bibitem{link11}
Ладыженская О. А., Солонников В. А., Уральцева Н. Н. Линейные и квазилинейные уравнения параболического типа. – 1967.

\bibitem{link1}
Zvonkin, A.K.: A transformation of the phase space of a diffusion process that
removes the drift. Math. USSR-Sb. 22(1), 129–149 (1974)

\bibitem{link2}
Nakao, S.: Comparison theorems for solutions of one-dimensional stochastic dif-
ferential equations. In: Maruyama, G., Prokhorov, Y.V. (eds.) Proceedings of the
Second Japan-USSR Symposium on Probability Theory. LNM, vol. 330, pp. 310–
315. Springer, Heidelberg (1973).
https://doi.org/10.1007/BFb0061496

% \bibitem{link4}
% Yamada, T.: On a comparison theorem for solutions of stochastic differential equations and its applications. J. Math. Kyoto Univ. 13 (3) 497 - 512, 1973.
% https://doi.org/10.1215/kjm/1250523321

\bibitem{link5}
Cloez, B., Hairer, M.: Exponential ergodicity for Markov processes with random
switching. Bernoulli 21(1), 505–536 (2015)

\bibitem{link6}
Veretennikov, A. Yu.: Positive recurrence of a solution of an SDE with variable switching intensities. Stoch PDE: Anal Comp (2022). 
https://doi.org/10.1007/s40072-022-00265-7

\bibitem{link7}
Shao, J., Yuan, C.: Stability of regime-switching processes under perturbation of transition rate matrices. Nonlinear Anal. Hybrid Syst 33, 211–226 (2019). 

\bibitem{link8}
Krylov, N.V., Safonov, M.V.: A certain property of solutions of parabolic equations with measurablecoefﬁcients. Math. USSR-Izv. 16(1), 151–164 (1981)

% \bibitem{link9}
% Ширяев А., Булинский А. Теория случайных процессов. – Litres, Москва, 2022.

\bibitem{link10}
Solonnikov, V.A.: On boundary value problems for linear parabolic systems of differential equations of general form. Proc. Steklov Inst. Math. 83, 1–184 (1965)


\bibitem{link12}
Collet P. et al. Stochastic models for a chemostat and long-time behavior //Advances in Applied Probability. – 2013. – Т. 45. – №. 3. – С. 822-836.

\bibitem{link13}
Fontbona J., Guérin H., Malrieu F. Quantitative estimates for the long-time behavior of an ergodic variant of the telegraph process //Advances in Applied Probability. – 2012. – Т. 44. – №. 4. – С. 977-994.

\bibitem{link14}
Boxma O. et al. On/off storage systems with state-dependent input, output, and switching rates //Probability in the Engineering and Informational Sciences. – 2005. – Т. 19. – №. 1. – С. 1-14.
\end{thebibliography}
\end{document}